\newcommand{\vs}{\vspace{0.4cm}}
\newcommand{\Z}{\mathbb{Z}}
\newtheorem{thm}{Theorem}[section]
\newtheorem{lem}[thm]{Lemma}
\newtheorem{prop}[thm]{Proposition}
\newtheorem{cor}[thm]{Corollary}
\theoremstyle{definition}
\newtheorem{df}[thm]{Definition}
\theoremstyle{remark}
\numberwithin{equation}{section}
\begin{document}

\title{Manifolds with odd Euler characteristic and higher orientability}

\author[R. S. Hoekzema]{Renee S. Hoekzema}
\address{Department of Mathematics, University of Oxford}

\email{hoekzema@maths.ox.ac.uk}
\thanks{This research was supported by the Hendrik Mullerfonds and the Fundatie van Renswoude}

%    \subjclass is required.
\subjclass[2010]{Primary 57R15, 57R20; Secondary 55S10, 55S05}

\date{}

\dedicatory{}

\begin{abstract}
	It is well-known that odd-dimensional manifolds have Euler characteristic zero. Furthemore orientable manifolds have an even Euler characteristic unless the dimension is a multiple of $4$. We prove here a generalisation of these statements: a $k$-orientable manifold (or more generally Poincar\'e complex) has even Euler characteristic unless the dimension is a multiple of $2^{k+1}$, where we call a manifold $k$-orientable if the $i^{th}$ Stiefel-Whitney class vanishes for all $0<i< 2^k$ ($k\geq 0$). More generally, we show that for a $k$-orientable manifold the Wu classes $v_l$ vanish for all $l$ that are not a multiple of $2^k$.
	For $k=0,1,2,3$, $k$-orientable manifolds with odd Euler characteristic exist in all dimensions $2^{k+1}m$, but whether there exist a 4-orientable manifold with an odd Euler characteristic is an open question.
\end{abstract}

\maketitle

\section{Introduction}

Manifolds are central objects of study in geometry and topology. 
A basic property of a manifold is whether it is \emph{orientable}.  If an $n$-dimensional manifold $M$ is closed and connected, then $M$ is orientable if and only if its $n^{th}$ homology group $H_n(M;\Z)$ is isomorphic to $\Z$. An orientation on $M$ is a choice of a generator for this group.
A manifold is also orientable if and only if the first Stiefel-Whitney class $w_1$ of the tangent bundle of $M$ vanishes. This characteristic class is the obstruction to lifting the classifying map $\tau: M \rightarrow BO(n)$ of the tangent bundle to $BSO(n)$, which is the double cover of $BO(n)$.

A more restrictive property of a manifold, extending orientability, is \emph{spinability}.
We call a manifold \emph{spinable} if $\tau$ admits a lift to $BSpin(n)$, the 3-connected cover of $BO(n)$. For an orientable manifold the obstruction for this lift is given by the second Stiefel-Whitney class $w_2$. Hence a manifold is spinable precisely if $w_1=w_2=0$. 

Orientability and spinability can be generalised to a sequence of higher orientability conditions in different ways, of which we here discuss three. The first two are well-studied, whereas the third notion is new.
The first generalisation of orientability is given by the property of being $k$\emph{-parallelizable}. A manifold is $k$-parallelizable if the classifying map of the tangent bundle $\tau$ can be lifted to the $k$-connected cover of $BO(n)$. 
These connected covers form the Whitehead tower of $BO(n)$, of which each stage successively kills a higher homotopy group. By Bott periodicity, $BO = \lim_{n}BO(n)$ has a non-trivial homotopy group in dimensions $0,1,2,4 \,\,(mod \,8)$, and therefore non-trivial stages of the Whitehead tower occur only in these dimensions. 
The first few non-trivial stages have special names:
\begin{equation}
BO(n) \leftarrow BSO(n) \leftarrow BSpin(n) \leftarrow BString(n) \leftarrow B5brane(n) \leftarrow... \nonumber
\end{equation}
For example, $BString(n)$ is the stage at which the fourth homotopy group is killed, and it is 7-connected. If $\tau$ admits a lift to $BString(n)$, we call the manifold \emph{stringable}, which corresponds to 7-parallelizable. For a spinable manifold, this requires $\tau$ to be trivial on the fourth homotopy group, which is a strictly stronger condition than requiring that $w_4=0$. The obstruction class to finding a string structure on a spin manifold is $\frac{p_1}{2} \in H^4(M; \Z)$. 
If a manifold is $k$-parallizable for $k=n$, the dimension of the manifold, then the manifold is \emph{parallelizable} or \emph{frameable}.

A second notion of ``higher'' orientations arises from considering orientability with respect to different (co)homology theories (see for example \cite{Rudyak1998}, chapter V). 
There are occasionally links between the notion of being $k$-parallelizable and being orientable with respect to a certain cohomology theory. For example, a smooth manifold admits an orientation in real $K$-theory if and only if it is spinable (\cite{Atiyah1964}). If a manifold is stringable, then it admits an orientation for $tmf$, the universal  elliptic cohomology theory (\cite{Ando2010}).

We now define a third sequence of orientability conditions that can also be viewed as a natural generalization of orientability and spinability:

\begin{df}
	We call a manifold $k$-orientable if $w_i=0$ for $0<i<2^k$.
\end{df}

A manifold is 1-orientable if and only if it is orientable. A manifold is 2-orientable if and only if it is spinable. If a manifold is stringable, then it is 3-orientable, but the converse is not true. A counterexample is given by the manifold $\mathbb{CP}^3$, all of whose Stiefel-Whitney classes vanish although it is not stringable (\cite{Douglas2010}). Generally, 
increasing parallelizability by one \emph{non-trivial} step in the Whitehead tower implies increasing orientability by one, but the first poses a strictly stronger condition (for details see Lemma \ref{parallel}).

\vs

Orientability bears a nice relationship to a very classical invariant of topological spaces, the Euler characteristic $\chi$. It is a simple consequence of Poincar\'e duality that manifolds with an odd dimension have vanishing $\chi$. For orientable manifolds however, $\chi$ is furthermore even in dimensions that are not a multiple of 4. This pattern continues: spinable manifolds have an even $\chi$ in dimensions that are not a multiple of 8. This is implied by Ochanine's theorem on the divisibility of the signature of spin manifolds in dimension $8k+4$ (\cite{Ochanine1981}). The generalisation of these statements to higher $k$-orientability is the main result of this paper:

\begin{thm}[Corollary \ref{mainthm2}]\label{mainthm}
	A $k$-orientable manifold or Poincar\'e complex $M$ ($k \geq 0$) has an even Euler characteristic $\chi (M)$ if its dimension is not a multiple of $2^{k+1}$. 
\end{thm}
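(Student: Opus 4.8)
The plan is to work mod 2 and exploit the relationship between the Euler characteristic, the top Wu class, and the Steenrod algebra action on $H^*(M;\Z/2)$. Recall that for a closed $n$-manifold (or Poincaré complex) $M$, the Euler characteristic mod 2 equals $\langle v_n, [M]\rangle$ where $v_n$ is the top Wu class, since $\chi(M) \equiv \langle w_n, [M]\rangle \pmod 2$ and $w = \mathrm{Sq}(v)$, so the top-degree component gives $w_n = \sum_{i} \mathrm{Sq}^i v_{n-i}$, and pairing against the fundamental class, only $v_n$ survives because $\mathrm{Sq}^i v_{n-i} = v_i \cup v_{n-i}$ by the Wu formula and for $i>0$ one can show these pair to zero under the $k$-orientability hypothesis (this is essentially the ``more general'' statement quoted from the abstract). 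So the crux is: \emph{if $M$ is $k$-orientable and $n$ is not divisible by $2^{k+1}$, then $v_n = 0$.}

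**The key step: vanishing of Wu classes.**

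I would prove that for a $k$-orientable Poincaré complex, $v_l = 0$ whenever $l$ is not a multiple of $2^k$, and moreover that when $n \equiv 2^k \pmod{2^{k+1}}$ one still gets $v_n = 0$ (the borderline case). The first part should follow from the Wu formula $\mathrm{Sq}^i(v_j) = \sum_t \binom{j-t}{i-t} v_{i-t} v_{j+t}$ together with an induction on $l$: the total Stiefel-Whitney class $w = \mathrm{Sq}(v)$ vanishing in degrees $0 < i < 2^k$ forces, degree by degree, that $v_i = 0$ for $0 < i < 2^k$, and then a further inductive argument using the Adem relations / the structure of $\mathrm{Sq}$ propagates the vanishing of $v_l$ to all $l$ not divisible by $2^k$. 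For the borderline divisibility $n = 2^{k+1}m + 2^k$: here I expect to use that $v_n = v_{2^k \cdot (2m+1)}$ would have to be a sum of products $v_a v_b$ with $a+b = n$ and $a,b$ multiples of $2^k$, hence $a = 2^k a'$, $b = 2^k b'$ with $a'+b' = 2m+1$ odd, so one of $a',b'$ is odd — and one needs the self-Wu-class relation $\langle v_n,[M]\rangle = \langle \mathrm{Sq}^{n/2} v_{n/2},[M]\rangle$ type argument, or more precisely the fact that $v_n$ is detected by $\mathrm{Sq}$ acting on lower classes, to conclude it pairs trivially.

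**Packaging the Steenrod-algebra input.**

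Concretely, I would set $\theta = 2^k$ and observe that under $k$-orientability the subalgebra of $H^*(M;\Z/2)$ generated in the relevant range, together with the Wu classes, behaves as though all squaring operations $\mathrm{Sq}^i$ with $i$ not a multiple of $\theta$ act trivially in low degrees; then the identity $\chi(M) \bmod 2 = \langle v_n,[M]\rangle$ combined with $v_n = \sum_{i>0}\mathrm{Sq}^i v_{n-i} + (\text{nothing})$ — reorganized via the Wu formula into a sum of $v_i v_{n-i}$ — gives $\chi(M) \equiv \sum_i \langle v_i v_{n-i},[M]\rangle$, and each term vanishes unless both indices are multiples of $\theta$, which by the parity argument above is impossible unless $\theta \mid n/\theta$ as well, i.e. $2^{k+1} \mid n$.

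**The main obstacle.**

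I expect the genuinely hard part to be the inductive vanishing statement $v_l = 0$ for $l \not\equiv 0 \pmod{2^k}$ — in particular making the induction go through cleanly using the Wu formula and Adem relations rather than hand-waving, and handling the borderline case $n \equiv 2^k \pmod{2^{k+1}}$ where $v_n$ need not vanish as a cohomology class but must still pair to zero with $[M]$. This last point is where Poincaré duality (the non-degeneracy of the cup product pairing, which is what defines the Wu classes in the first place) has to be used in an essential and slightly delicate way, and it is presumably why the theorem is stated as a corollary of a more refined statement about the Wu classes themselves.
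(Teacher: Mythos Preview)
Your overall strategy—reduce to the vanishing of a Wu class and use the Wu-class vanishing result $v_l=0$ for $2^k\nmid l$—matches the paper's. But there is a genuine indexing error that creates a phantom difficulty. For an $n$-dimensional Poincar\'e complex one always has $v_l=0$ for $l>n/2$, so in particular $v_n=0$ automatically; the claim ``$\chi(M)\equiv\langle v_n,[M]\rangle$'' is false. What is true is that in $w_n=\sum_i Sq^i v_{n-i}$ the only possibly nonzero summand is $Sq^{n/2}v_{n/2}=v_{n/2}^2$, so $\chi(M)\equiv\langle v_{n/2}^2,[M]\rangle$. Thus the class you must kill is $v_{n/2}$, not $v_n$. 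Once you fix this, your ``borderline case'' $n=2^{k+1}m+2^k$ evaporates: then $n/2=2^k m+2^{k-1}$ is \emph{not} divisible by $2^k$, so the vanishing statement $v_l=0$ for $2^k\nmid l$ already gives $v_{n/2}=0$ directly. There is no second case, and no need for any argument about $v_{n/2}$ ``pairing to zero without vanishing''.

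Second, your proposed mechanism for the Wu-class vanishing is shaky. The formula you quote, $Sq^i(v_j)=\sum_t\binom{j-t}{i-t}v_{i-t}v_{j+t}$, is not the Wu formula (that one computes $Sq^i(w_j)$, not $Sq^i(v_j)$); there is no clean polynomial identity of this shape for the Wu classes, so the induction you sketch has no foothold. The paper's argument avoids this entirely: for $l$ with $2^k\nmid l$, write $l=2^{k'}m'+2^{k'-1}$ with $k'\le k$ and use the Adem relations (iterated, with Lemma~\ref{claim} controlling the binomials) to express $Sq^l=\sum_{i=1}^{2^{k'-1}}Sq^i\alpha^{l-i}$ with a \emph{small} square on the left. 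Then $\langle Sq^l x,[M]\rangle=\sum_i\langle Sq^i(\alpha^{l-i}x),[M]\rangle=\sum_i\langle v_i\smile\alpha^{l-i}x,[M]\rangle=0$ since each $v_i$ with $0<i\le 2^{k'-1}<2^k$ vanishes by $k$-orientability. This is purely a Steenrod-algebra identity plus the \emph{definition} of the Wu classes; no formula for $Sq^i(v_j)$ is needed.
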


This theorem follows as a corollary from the following stronger statement about the vanishing of a large number of Wu classes.

\begin{thm}[Theorem \ref{morewus2}]\label{morewus}
	Let $M^{n}$ be a manifold or Poincar\'e complex of dimension $n$.  If $M$ is $k$-orientable for some $k$, then the Wu classes $v_l$ vanish for all $l$ such that $2^k \nmid l$.
\end{thm}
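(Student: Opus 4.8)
The plan is to deduce the statement from the defining property of the Wu classes together with a purely algebraic fact about the mod $2$ Steenrod algebra $\mathcal{A}$. Recall that on a closed manifold or Poincar\'e complex $M^{n}$, with mod $2$ fundamental class $[M]$, the Wu class $v_{l}\in H^{l}(M;\Z/2)$ is characterised by $\langle v_{l}\cup x,[M]\rangle=\langle \mathrm{Sq}^{l}x,[M]\rangle$ for all $x\in H^{n-l}(M;\Z/2)$, and since the cup product pairing $H^{l}\otimes H^{n-l}\to\Z/2$ is nondegenerate, $v_{l}=0$ is equivalent to $\langle \mathrm{Sq}^{l}x,[M]\rangle=0$ for all such $x$. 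By Wu's theorem the total Stiefel--Whitney class satisfies $w=\mathrm{Sq}(v)$; inverting this relation, every $v_{j}$ with $j\geq 1$ is a polynomial without constant term in $w_{1},\dots,w_{j}$, so $k$-orientability forces $v_{j}=0$ for all $0<j<2^{k}$. Hence it suffices to prove the following algebraic claim: \emph{if $2^{k}\nmid l$, then $\mathrm{Sq}^{l}$ can be written in $\mathcal{A}$ as a finite sum $\sum_{0<j<2^{k}}\mathrm{Sq}^{j}\gamma_{j}$ with each $\gamma_{j}\in\mathcal{A}$}. Given such an expression (with $\gamma_{j}$ taken homogeneous of degree $l-j$), for $x\in H^{n-l}(M;\Z/2)$ we get $\langle \mathrm{Sq}^{l}x,[M]\rangle=\sum_{j}\langle \mathrm{Sq}^{j}(\gamma_{j}x),[M]\rangle=\sum_{j}\langle v_{j}\cup\gamma_{j}x,[M]\rangle=0$, because $\gamma_{j}x\in H^{n-j}(M;\Z/2)$ and $v_{j}=0$; therefore $v_{l}=0$.

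To prove the algebraic claim I would use strong induction on $l$. If $l<2^{k}$ it is immediate, since then $\mathrm{Sq}^{l}=\mathrm{Sq}^{l}\cdot 1$ with $0<l<2^{k}$. Suppose now $l\geq 2^{k}$ and $2^{k}\nmid l$, and let $a=\nu_{2}(l)$ be the $2$-adic valuation of $l$; then $a<k$, and writing $l=2^{a}m$ with $m$ odd we have $m\geq 3$ since $l\geq 2^{k}>2^{a}$. Consequently $2^{a}<2(l-2^{a})$, so the Adem relation applies to $\mathrm{Sq}^{2^{a}}\mathrm{Sq}^{l-2^{a}}$:
\begin{equation}
\mathrm{Sq}^{2^{a}}\mathrm{Sq}^{l-2^{a}}=\sum_{c=0}^{\lfloor 2^{a}/2\rfloor}\binom{l-2^{a}-c-1}{2^{a}-2c}\,\mathrm{Sq}^{l-c}\mathrm{Sq}^{c}. \nonumber
\end{equation}

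Two small arithmetic inputs then close the induction. First, the $c=0$ coefficient $\binom{l-2^{a}-1}{2^{a}}$ is odd: writing $m-1=2m'$ gives $l-2^{a}-1=2^{a+1}m'-1$, whose lowest $a+1$ binary digits are all $1$, so Lucas' theorem yields $\binom{l-2^{a}-1}{2^{a}}\equiv 1\pmod 2$. We may therefore solve the relation above for $\mathrm{Sq}^{l}$, obtaining $\mathrm{Sq}^{l}=\mathrm{Sq}^{2^{a}}\mathrm{Sq}^{l-2^{a}}+\sum_{1\leq c\leq\lfloor 2^{a}/2\rfloor}\binom{l-2^{a}-c-1}{2^{a}-2c}\mathrm{Sq}^{l-c}\mathrm{Sq}^{c}$. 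The first summand lies in $\mathrm{Sq}^{2^{a}}\mathcal{A}$ with $0<2^{a}<2^{k}$, as needed. Second, for each $c$ with $1\leq c\leq\lfloor 2^{a}/2\rfloor$ we have $2^{k}\nmid(l-c)$: otherwise $\nu_{2}(l-c)\geq k>a=\nu_{2}(l)$, forcing $\nu_{2}(c)=a$ and hence $2^{a}\mid c$, which contradicts $1\leq c\leq\lfloor 2^{a}/2\rfloor<2^{a}$. Since also $0<l-c<l$, the inductive hypothesis gives $\mathrm{Sq}^{l-c}\in\sum_{0<j<2^{k}}\mathrm{Sq}^{j}\mathcal{A}$, whence $\mathrm{Sq}^{l-c}\mathrm{Sq}^{c}$ lies there too. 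This establishes the claim for $l$.

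I expect the real obstacle to be locating this inductive mechanism: recognising that one should peel off precisely $\mathrm{Sq}^{2^{\nu_{2}(l)}}$ on the left via an Adem relation, and then checking the two mod $2$ arithmetic facts that make it work --- the Lucas computation guaranteeing that $\mathrm{Sq}^{l}$ reappears with coefficient $1$, and the valuation count keeping each leftover factor $\mathrm{Sq}^{l-c}$ within the range of the induction. The remaining ingredients (the defining property of $v_{l}$, nondegeneracy of the Poincar\'e pairing, and $v_{j}=0$ for $0<j<2^{k}$) are routine, and since the argument only uses a mod $2$ fundamental class and mod $2$ Poincar\'e duality it applies to Poincar\'e complexes without change.
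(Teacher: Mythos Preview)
Your argument is correct and follows essentially the same route as the paper: reduce to an algebraic statement in $\mathcal{A}$ asserting that $\mathrm{Sq}^{l}$ lies in the left ideal $\sum_{0<j<2^{k}}\mathrm{Sq}^{j}\mathcal{A}$ whenever $2^{k}\nmid l$, then feed this into the defining property of the Wu classes together with $v_{j}=0$ for $0<j<2^{k}$. The only difference is organisational: the paper (Proposition~\ref{Lemma}) fixes $b=2^{k'}m'$ with $k'-1=\nu_{2}(l)$ and inducts on the offset $i$ in $\mathrm{Sq}^{2^{k'}m'+i}$ using the Adem relation for $(a,b)=(i,2^{k'}m')$ together with $\binom{2^{k'}m'-1}{i}\equiv 1$, thereby obtaining the slightly sharper bound $j\le 2^{\nu_{2}(l)}$ on the left factor; you instead run a single strong induction on $l$, always splitting off $\mathrm{Sq}^{2^{\nu_{2}(l)}}$ and observing that each leftover $l-c$ still satisfies $2^{k}\nmid(l-c)$. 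Both inductions rest on the same Lucas-type parity check, and your version is a clean repackaging that yields exactly what the theorem needs.
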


The proof is based on a relation within the Steenrod algebra derived from recursive application of the Adem relations.

In the 50's and 60's of the last century, the problem of relations between Stiefel-Whitney classes of manifolds was studied in \cite{Dold1956, Massey1960, Brown1964}. The question posed was to find all polynomials in Stiefel-Whitney classes of homogeneous cohomological degree that vanish for all manifolds of a given dimension $n$. Dold answered this question for degree $n$ polynomials, and Brown and Peterson gave a complete although indirect description of this set of polynomials for arbitrary degree in terms of a right action of the Steenrod algebra on the cohomology of $BO$. In the current work, we partially answer the subtly different question ``if $w_i=0$ for $0<i<2^k$, does $w_n$ vanish?'' Indeed, Brown and Peterson found an extended set of relations when considering the case of orientable manifolds. It would be interesting to study the more general question ``if $w_i=0$ for $0<i<2^k$, which $w_j$ can be non-vanishing?'', as well as to make the relations found in \cite{Brown1964} more explicit.

Theorem \ref{mainthm} in particular implies that 3-orientable and therefore stringable manifolds have an even Euler characteristic unless the dimension is a multiple of 16, that 4-orientable and therefore 5braneable manifolds have an even $\chi$ unless the dimension is a multiple of 32, et cetera.
One might wonder whether the theorem is strict: whether $k$-orientable manifolds with an odd Euler characteristic exist in all dimensions that are a multiple of $2^{k+1}$. 
There are non-orientable manifolds with odd $\chi$ in every even dimension as $\chi (\mathbb{RP}^{2m})=1$. Similarly, even-dimensional complex projective spaces $\mathbb{CP}^{2m}$, even-dimensional quaternion projective spaces $\mathbb{HP}^{2m}$, and powers of the octonion projective plane or Cayley plane $(\mathbb{OP}^{2})^m$ are examples of manifolds with an odd Euler characteristic that are orientable and of dimension $4m$, spinable of dimension $8m$, and stringable of dimension $16m$ respectively. Hence the theorem is strict for $k=0,1,2,3$. However, to the author's knowledge, it is an open question whether there exist 4-orientable manifolds with an odd Euler characteristic. In particular, one might wonder whether there exist 8-connected manifolds with an odd Euler characteristic.

\subsubsection*{Conventions}
Throughout we will be considering (co)homology with $\Z/2$ coefficients, hence from here on we write $H^n(X)$ for $H^n(X;\Z/2)$. Manifolds are considered to be closed and without boundary.

\subsubsection*{Acknowledgements}
I would like to thank Oscar Randal-Williams, Chris Douglas, Andr\'e Henriques, Diarmuid Crowley, Markus Land, Fabian Hebestreit and my supervisor Ulrike Tillmann for their comments and support.

\section{Preliminaries}

This section recalls definitions and some properties of Steenrod squares, Stiefel-Whitney classes and Wu classes.

\subsection{Steenrod Algebra}

Steenrod squares are cohomology operations on $\Z/2$ cohomology. 
We recall from \cite{Steenrod} their axiomatic definition:

\begin{enumerate}
	\item $Sq^i: H^n(X) \rightarrow H^{n+i}(X)$ are cohomology operations, i.e. they are natural with respect to maps of spaces.
	\item $Sq^0$ is the identity homomorphism.
	\item $Sq^{|x|}(x) = x \smile x$.	
	\item If $i > |x|$, where $|x|$ is the degree of $x$, then $Sq^i (x)=0$.
	\item The Steenrod squares obey the \emph{Cartan formula}:
	\begin{equation}
	Sq^k (x \smile y) = \sum_{i+j=k} Sq^i (x) \smile Sq^j (y) 
	\end{equation}
\end{enumerate}

It follows from these axioms that the Steenrod squares obey the \emph{Adem relations}: for all $a$ and $b$ such that $a < 2b$, we have
	\begin{equation}
	Sq^a  Sq^b =  \sum_{c=0}^{\lfloor a/2\rfloor} {b-c-1 \choose a-2c} Sq^{a+b-c}  Sq^c.
	\end{equation}

As a result of the Adem relations, certain composites of Steenrod squares always vanish, for example $Sq^{2n-1} Sq^n=0$, as for $a=2n-1$ and $b=n$ all binomials on the right hand side of the Adem relations vanish.
Another consequence of the Adem relations is that individual squares can be decomposed into a sum of composites of smaller squares, unless they are of the form $Sq^{2^i}$ for some $i$. 
%The proof of this is analogous to the proof we shall give of Lemma \ref{power2}.
Steenrod squares, subject to the Adem relations, generate a graded Hopf algebra over $\mathbb{F}_2$ called the \emph{Steenrod algebra} $\mathcal{A}$. 

One often groups the Steenrod operations into the \emph{total Steenrod square} acting on $\bigoplus_i H^i(X)$ given by 
\begin{equation}
Sq = Sq^0+Sq^1 +Sq^2 + ...
\end{equation}

\subsection{Stiefel-Whitney classes}\label{SWsection}

Stiefel-Whitney classes are $\Z/2$ characteristic classes of vector bundles. We recall some properties from \cite{May}, Chapter 23.
An $n$-dimensional vector bundle $\mathbb{R}^n \rightarrow \xi \rightarrow M$ over a manifold $M$ can be viewed as the pull-back of the universal bundle $\mathbb{R}^n \rightarrow \gamma_n \rightarrow BO(n)$ under a classifying map which we shall also denote $\xi$. 
\begin{center}
	\begin{tikzcd}
		\xi \arrow{r} \arrow{d}  & \gamma_n\arrow{d}\\
		M \arrow{r}{\xi} & BO(n)
	\end{tikzcd}
\end{center}

The $\Z/2$ cohomology of $BO(n)$ is given by:

\begin{equation}
H^* (BO(n)) = \Z/2 [w_1, w_2, ... , w_n],
\end{equation}
where $w_i$ with $|w_i|=i$ are the universal Stiefel-Whitney classes. We can ask what the image of this cohomology is under the classifying map $\xi$.

\begin{df}
	Given a vector bundle $\xi$ over a manifold $M$, the $i$-th Stiefel-Whitney class $w_i(\xi) \in H^{i}(M)$ is defined as 
	\begin{equation}
	w_i(\xi) = \xi^*(w_i), 
	\end{equation}
	for $w_i \in H^* (BO)$ the universal $i$-th Stiefel-Whitney class.
\end{df}

By the Stiefel-Whitney classes of a manifold we mean the Stiefel-Whitney classes of its tangent bundle. 
The Stiefel-Whitney classes of a manifold obey the Wu formula:

\begin{equation}\label{Wuformula}
Sq^i (w_j) = \sum_{t=0}^i {j+t-i-1 \choose t} w_{i-t} w_{j+t}.
\end{equation}

\subsection{Wu classes}

Let $M$ be an $n$-dimensional \emph{Poincar\'e complex} (see for example \cite{Ranicki2002}) with fundamental class $\mu \in H_n(M)$ such that 
\begin{equation}
-\cap \mu : H^*(M) \rightarrow H_{n-*}(M) \nonumber
\end{equation}
is an isomorphism. A topological manifold is a particular example of a Poincar\'e complex, as is any finite CW complex that is homotopy equivalent to a manifold.

Let
\begin{equation}
\langle .,. \rangle : H^i(M) \times H_i (M) \rightarrow \Z/2
\end{equation}
be the pairing of cohomology and homology. $M$ satisfies Poincar\'e duality for $\Z/2$ (co)homology, hence we have
\begin{equation}
Hom(H^{n-k}(M),\Z/2) \cong H_{n-k}(M) \cong H^k(M),
\end{equation}
where a cohomology class $y \in H^k(M)$ corresponds to the homomorphism $x \mapsto \langle y \smile x , [M] \rangle$. In particular, one element of $Hom(H^{n-k}(M),\Z/2)$ is given by $x \mapsto \langle Sq^k(x), [M] \rangle$. Under the above isomorphism, this corresponds to a degree $k$ cohomology class $v_k$ called the $k$-th Wu class, with
\begin{equation}\label{defWuclass}
\langle v_k \smile x, [M] \rangle =\langle Sq^k( x), [M] \rangle,
\end{equation}
for all $x \in H^{n-k}(M)$.
We can define the total Wu class as the formal sum 
\begin{equation}
v = 1 + v_1+v_2 +... + v_n
\end{equation}
This allows us to replace equation (\ref{defWuclass}) by 
\begin{equation}
\langle v \smile x, [M] \rangle =\langle Sq( x), [M] \rangle,
\end{equation}
where $Sq$ is the total Steenrod square and $x \in H^*(M)$.

Because $Sq^i (x)=0$ if $i > |x|$, an $n$-dimensional manifold only admits non-zero Steenrod operations for $i \leq \frac{n}{2}$. Hence a manifold has maximally $\frac{n}{2}$ non-zero Wu classes.

\vs

For a smooth manifold $M$ of dimension $n$, the Stiefel-Whitney classes of the manifold can be determined from the Steenrod squares and Wu classes. Define the total Stiefel-Whitney class as the formal sum
\begin{equation}
w= 1+ w_1 + w_2 + ... + w_n.
\end{equation}
The total Stiefel-Whitney class $w$ is then determined by the total Steenrod square $Sq$ and the total Wu class $v$ via the relation (see for example \cite{May}, Chapter 23):
\begin{equation}
w = Sq(v).\label{SW-Wu}
\end{equation}
This formula can be used to define Stiefel-Whitney classes for a Poincar\'e complex.

\section{Vanishing top Wu class implies even Euler characteristic}
As a key step in the proof of Theorem \ref{morewus} we will need the statement that the vanishing of the top Wu class of an even-dimensional manifold or Poincar\'e complex implies that the Euler characteristic is even. The vanishing of the top Wu class is in fact a stronger condition than having an even Euler characteristic. 
This is a classical result, but will be recalled in this section.

\begin{thm}[\cite{MilnorStasheff}]
For a smooth compact manifold $M^n$, the top Stiefel-Whitney class $w_n$ is the modulo 2 reduction of an integral characteristic class called the Euler class and it follows that the Stiefel-Whitney number $\langle w_n, [M] \rangle$ equals $\chi(M)$ modulo 2, hence $w_n=0$ if and only if $\chi(M)$ is even.
\end{thm}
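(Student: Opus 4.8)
The plan is to route the statement through the Euler class of the tangent bundle, splitting it into one genuinely geometric input and one piece of formal Poincar\'e-duality bookkeeping. Recall that an oriented rank-$n$ real bundle $\xi\to B$ carries an Euler class $e(\xi)\in H^n(B;\Z)$, obtained by restricting the integral Thom class to the zero section; for the tangent bundle of a possibly non-orientable $M$ one replaces $\Z$ by the coefficient system twisted by $w_1(M)$, a distinction that disappears after reduction mod $2$. The first ingredient I would recall is that $w_n(\xi)$ is the mod $2$ reduction of $e(\xi)$: writing $\bar u\in H^n(D\xi,S\xi;\Z/2)$ for the mod $2$ Thom class and $\bar\phi$ for the Thom isomorphism, Thom's formula gives $w_n(\xi)=\bar\phi^{-1}(Sq^n\bar u)$, and since $\bar u$ has degree $n$ we have $Sq^n\bar u=\bar u\smile\bar u=\bar\phi(\,\overline{e(\xi)}\,)$, so $w_n(\xi)\equiv e(\xi)\pmod 2$. (Equivalently, bypassing twisted coefficients: $w_n(TM)\in H^n(M;\Z/2)$ is the mod $2$ Poincar\'e dual of the zero locus of a vector field on $M$ transverse to the zero section, a finite set of points.) The second, and only substantial, ingredient is that evaluating the Euler class of $TM$ on the fundamental class returns the Euler characteristic, $\langle e(TM),[M]\rangle=\chi(M)$: this is the Poincar\'e--Hopf theorem, equivalently the statement that $e(TM)$ is Poincar\'e dual to the self-intersection of the diagonal $\Delta\subset M\times M$, whose intersection number is $\chi(M)$ by the Lefschetz fixed-point formula applied to $\mathrm{id}_M$.

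Granting these two facts, I would reduce the identity $\langle e(TM),[M]\rangle=\chi(M)$ modulo $2$, using the mod $2$ fundamental class $[M]\in H_n(M;\Z/2)$ (which exists regardless of orientability), to obtain the Stiefel--Whitney number identity $\langle w_n(TM),[M]\rangle\equiv\chi(M)\pmod 2$. It then remains to upgrade this numerical congruence to the asserted equivalence $w_n(TM)=0\iff\chi(M)$ even. For connected $M$ this is immediate from $\Z/2$ Poincar\'e duality: $-\cap[M]\colon H^n(M;\Z/2)\to H_0(M;\Z/2)$ is an isomorphism, so $H^n(M;\Z/2)\cong\Z/2$ and the evaluation $x\mapsto\langle x,[M]\rangle$ is injective on it; hence $w_n(TM)$ vanishes exactly when $\langle w_n(TM),[M]\rangle=0$, i.e.\ exactly when $\chi(M)$ is even. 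For a general closed $M$ one applies this componentwise; the forward implication $w_n(TM)=0\Rightarrow\chi(M)$ even — all that is needed later — then still holds by additivity of $\chi$, while the converse can fail for disconnected $M$ (e.g.\ two copies of $\mathbb{RP}^2$).

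The only non-formal step, were one to prove this from scratch, is the single geometric identity $\langle e(TM),[M]\rangle=\chi(M)$; everything else is manipulation of the Thom isomorphism and of $\Z/2$ Poincar\'e duality. Since the result is classical and is merely being recalled here, I would simply cite \cite{MilnorStasheff} for both $w_n(\xi)\equiv e(\xi)\pmod 2$ and the Euler-class/Euler-characteristic identity, and write out in detail only the short Poincar\'e-duality argument that converts the Stiefel--Whitney number statement into the statement about $w_n$.
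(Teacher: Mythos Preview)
The paper does not give a proof of this theorem at all: it is stated with the attribution \cite{MilnorStasheff} and no argument, serving only as a classical input to the subsequent discussion. Your proposal correctly sketches the standard Milnor--Stasheff argument (Thom's formula gives $w_n\equiv e\pmod 2$, Poincar\'e--Hopf gives $\langle e(TM),[M]\rangle=\chi(M)$, and $\Z/2$ Poincar\'e duality upgrades the Stiefel--Whitney number to the vanishing statement), and you yourself conclude that one should simply cite the reference --- which is precisely what the paper does. So there is nothing to compare: your approach and the paper's coincide, with you supplying the details that the paper omits. Your remark that the ``if and only if'' can fail for disconnected $M$ is a legitimate caveat to the statement as literally written, though only the forward implication is used downstream.
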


For a manifold of even dimension $2n$, equation \ref{SW-Wu} implies that the top Stiefel-Witney class $w_{2n}= Sq^n v_n = v_n^2$, hence $v_n=0$ implies $w_{2n}=0$, which implies that the Euler characteristic is even.

In the more general case of a Poincar\'e complex, one can define Stiefel-Whitney classes via the Wu formula. One can then show that it still holds that the top Stiefel-Whitney class $w_{2n}$ is zero if and only if the Euler characteristic is even.
We here take a different route to illustrate an alternative proof method.
In order to avoid the use of the Euler class we will prove that the vanishing of $v_n$ implies an even Euler characteristic directly. For this we will first give a proof of the statement that a symplectic vector space over any field, in particular also $\mathbb{F}_2$, has even dimension. This statement is very classical (see for example \cite{Jacobson}, section 6.2). We call a vector space $V$ over a field $k$ \emph{symplectic} if it is endowed with a bilinear form $A:V^n \times V^n\rightarrow k$ that is non-degenerate, antisymmetric and has vanishing diagonal. The last condition is superfluous if $char(k)\neq 2$.

\begin{lem}\label{bilinear} 
	Let $V^n$ be an $n$-dimensional vector space over a field $k$ and $A:V^n \times V^n\rightarrow k$ an antisymmetric bilinear form with vanishing diagonal, i.e.
	\begin{equation}
	A(v,v)=0 \hspace{1cm} \forall v \in V^n.
	\end{equation}
	If $n$ is odd, then $A$ is degenerate i.e. $\det A =0$.
	
\end{lem}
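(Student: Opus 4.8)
The plan is to proceed by induction on $n$, or equivalently by a direct argument showing that the matrix of $A$ in a suitable basis decomposes into $2\times2$ blocks. Concretely, suppose for contradiction that $A$ is non-degenerate. If $V^n = 0$ there is nothing to prove, so assume $n \geq 1$ and pick any nonzero $v_1 \in V^n$. Since $A$ is non-degenerate, the linear functional $A(v_1, -)$ is not identically zero, so there exists $v_2$ with $A(v_1, v_2) \neq 0$; rescaling, we may take $A(v_1, v_2) = 1$. The vanishing-diagonal hypothesis gives $A(v_1, v_1) = A(v_2, v_2) = 0$, and antisymmetry gives $A(v_2, v_1) = -1$ (which equals $1$ in characteristic $2$, consistent with the hypothesis). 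In particular $v_1$ and $v_2$ are linearly independent, since $A$ restricted to $\mathrm{span}(v_1, v_2)$ has matrix $\left(\begin{smallmatrix} 0 & 1 \\ -1 & 0 \end{smallmatrix}\right)$, which is invertible.

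Next I would split off this hyperbolic plane. Let $W = \mathrm{span}(v_1, v_2)$ and let $W^\perp = \{\, w \in V^n : A(w, v_1) = A(w, v_2) = 0 \,\}$. The standard claim is that $V^n = W \oplus W^\perp$: given $w \in V^n$, the vector $w' = w - A(w,v_2)v_1 + A(w,v_1)v_2$ (sign conventions adjusted to the chosen normalization) lies in $W^\perp$, which shows $V^n = W + W^\perp$, and $W \cap W^\perp = 0$ follows because $A|_W$ is non-degenerate. Moreover $A$ restricted to $W^\perp$ is again antisymmetric with vanishing diagonal, and it is non-degenerate: if $w \in W^\perp$ paired trivially with all of $W^\perp$, then since it already pairs trivially with $W$ it would be in the radical of $A$ on $V^n$, contradicting non-degeneracy. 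Hence $W^\perp$ is a symplectic space of dimension $n - 2$, and by induction $n - 2$ is even, so $n$ is even — contradicting that $n$ is odd. Therefore $A$ must be degenerate.

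The one point that needs a little care — the main obstacle — is making sure every step is valid in characteristic $2$, where ``antisymmetric'' ($A(x,y) = -A(y,x)$) coincides with ``symmetric,'' so that the vanishing-diagonal hypothesis is doing essential work rather than being automatic. The argument above only ever uses antisymmetry together with $A(v,v)=0$, never a sign that would be invisible mod $2$, so it goes through uniformly over any field; but I would state this explicitly. A cleaner packaging, if preferred, avoids the word ``induction'': one shows directly that any antisymmetric form with zero diagonal admits a basis in which its matrix is block-diagonal with blocks $\left(\begin{smallmatrix} 0 & 1 \\ -1 & 0 \end{smallmatrix}\right)$ and a zero block (the radical), whence $\det A \neq 0$ forces the radical to be trivial and the number of $2\times 2$ blocks to be exactly $n/2$, which is impossible for odd $n$. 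Either route reaches the conclusion; I would likely present the block-decomposition version, since it also records the normal form that makes the parity statement transparent.
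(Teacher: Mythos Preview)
Your proof is correct. You use the standard symplectic Gram--Schmidt (Darboux) argument: assume non-degeneracy, peel off a hyperbolic plane, and induct on $n-2$. Each step you wrote goes through, and your care about characteristic $2$ (relying only on antisymmetry together with $A(v,v)=0$) is exactly the point.

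The paper, however, proves the lemma by a genuinely different route: a direct computation with the Leibniz formula $\det A=\sum_{\sigma\in S_n}\mathrm{sgn}(\sigma)\prod_i a_{i,\sigma(i)}$. Since the diagonal entries vanish, only fixed-point-free permutations contribute; since $n$ is odd, none of these is an involution, so the surviving $\sigma$'s pair off with their inverses, and antisymmetry makes each pair cancel. Your argument is structural and yields more---a full normal form for $A$---while the paper's is a short combinatorial cancellation that establishes $\det A=0$ directly without induction or a contradiction hypothesis. Both are standard; they simply emphasise different aspects (geometry of the form versus combinatorics of the determinant).
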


\begin{proof}
	Consider the formula for the determinant:
	\begin{equation}
	\det(A)=\sum_{\sigma \in S_n} \text{sgn}(\sigma) \prod_{i=1}^{n} a_{i, \sigma(i)},
	\end{equation}
	where $S_n$ is the symmetric group on $n$ letters and $a_{i,j}$ are the matrix elements of $A$ in a given basis of $V^n$. As the diagonal of $A$ expressed as a matrix in this basis vanishes, any permutation $\sigma$ that leaves one or more indices unchanged does not contribute to the sum. Hence, we can restrict to the set of permutations $\Sigma_n$ containing no cycles of length 1. Within this subset, an element is of order two (its own inverse) if and only if it consists of disjoint cycles of length two. Since $n$ is odd, there us no such element. Hence $\Sigma_n$ can be partitioned as $\Sigma_n^+ \sqcup \Sigma_n^-$ such that $\Sigma_n^-$ consists of the inverses of the elements in $\Sigma_n^+$.
	The determinant formula then becomes:
	
	\begin{equation}
	\det(A)=\sum_{\sigma \in \Sigma_n^+} \text{sgn}(\sigma) \prod_{i=1}^{n} a_{i, \sigma(i)} +\text{sgn}(\sigma^{-1}) \prod_{i=1}^{n} a_{i, \sigma^{-1}(i)}. \label{products}
	\end{equation}
	
	Note that $\text{sgn}(\sigma)=\text{sgn}(\sigma^{-1})$. Furthermore, as $A$ is antisymmetric and $n$ odd, $\prod_{i=1}^{n} a_{i, \sigma^{-1}(i)} =  - \prod_{i=1}^{n} a_{\sigma^{-1}(i),i} = - \prod_{i=1}^{n} a_{i, \sigma(i)}$, by rearranging the terms in the product. Hence the two products in equation (\ref{products}) cancel and hence $det(A)=0$.
\end{proof}

%The condition of $k$-orientability for manifolds can equivalently be phrased as the condition that $v_i=0$ for $i\leq2^{k-1}$ by Lemma \ref{orientableWu}. Hence we can take this as the definition of $k$-orientability for a Poincar\'e complex. Alternatively 

\begin{thm}\label{topwu}
Let $M^{2n}$ be a Poincar\'e complex of dimension $2n$. Then $v_n=0$ implies that $\chi(M)$ is even.
\end{thm}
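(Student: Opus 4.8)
The plan is to show that the hypothesis $v_n=0$ forces the middle $\mathbb{F}_2$-Betti number $\dim_{\mathbb{F}_2}H^n(M)$ to be even, and to note that this number already controls the parity of $\chi(M)$. For the latter, recall that $\chi(M)=\sum_{i=0}^{2n}(-1)^i\dim_{\mathbb{F}_2}H^i(M)$ (the Euler characteristic may be computed with any field coefficients). Poincar\'e duality over $\mathbb{F}_2$ gives $\dim_{\mathbb{F}_2}H^i(M)=\dim_{\mathbb{F}_2}H^{2n-i}(M)$ for every $i$, so the contributions of $i$ and $2n-i$ with $i\neq n$ cancel modulo $2$, leaving $\chi(M)\equiv \dim_{\mathbb{F}_2}H^n(M)\pmod 2$. (If $M$ is disconnected one runs this on each component separately.)

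Next I would set up the cup product form in the middle dimension: $B\colon H^n(M)\times H^n(M)\to \mathbb{F}_2$, $B(x,y)=\langle x\smile y,[M]\rangle$. It is symmetric, hence — since we are over $\mathbb{F}_2$ — also antisymmetric. The isomorphism $\mathrm{Hom}(H^n(M),\mathbb{F}_2)\cong H^n(M)$ recalled above (Poincar\'e duality composed with the Kronecker pairing, which is perfect because $\mathbb{F}_2$ is a field) says precisely that $B$ is non-degenerate. Finally, for $x\in H^n(M)$ axiom (3) of the Steenrod squares gives $Sq^n(x)=x\smile x$, so by the defining property (\ref{defWuclass}) of the Wu class $B(x,x)=\langle x\smile x,[M]\rangle=\langle Sq^n(x),[M]\rangle=\langle v_n\smile x,[M]\rangle$. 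Thus $v_n=0$ makes $B$ vanish on the diagonal.

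Putting this together, $B$ is a non-degenerate antisymmetric bilinear form with vanishing diagonal on $H^n(M)$, so the contrapositive of Lemma \ref{bilinear} forces $\dim_{\mathbb{F}_2}H^n(M)$ to be even, and therefore $\chi(M)$ is even. There is no genuinely hard step here; the only point that deserves care is the non-degeneracy of $B$, which is exactly Poincar\'e duality with $\mathbb{F}_2$-coefficients as already recorded in the preliminaries, and the mod-$2$ reduction of $\chi$ to the middle Betti number, which is routine.
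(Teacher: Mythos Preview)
Your argument is correct and is essentially identical to the paper's own proof: both set up the cup-product pairing on $H^n(M;\mathbb{F}_2)$, use $v_n=0$ together with $Sq^n(x)=x\smile x$ to kill the diagonal, invoke Poincar\'e duality for non-degeneracy, and then apply Lemma~\ref{bilinear} to conclude that $\dim_{\mathbb{F}_2}H^n(M)$ and hence $\chi(M)$ is even. If anything, your write-up is slightly more careful in spelling out why $\chi(M)\equiv \dim_{\mathbb{F}_2}H^n(M)\pmod 2$ and in phrasing the vanishing of the diagonal via the pairing rather than as an equality of top-degree classes.
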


\begin{proof}
	Define the (anti-)symmetric bilinear form 
\begin{equation}
\omega: H^n(X)\times H^n(X)\rightarrow \mathbb{Z}/2 \nonumber
\end{equation}
given by $\omega(x,y) =\langle (x \smile y),\mu  \rangle$.  

As $x^2  =Sq^n(x) =v_n \smile x =0$ for all $x \in H^n(X)$, the diagonal of the $\omega$ is zero. It is furthermore non-degenerate because of Poincar\'e duality and because we are working over a field. 
Therefore $H^n(X)$ is a symplectic vector space, and thus has even dimension by Lemma \ref{bilinear}. The parity of the Euler characteristic of a Poincar\'e complex is determined by the parity of the dimension of the middle cohomology, hence it is even.
\end{proof}

\section{$k$-orientability}

A manifold is called $k$-orientable if $w_i=0$ for $0<i<2^k$.
This definition is motivated by \cite{Douglas2010}, where it is noted that the condition $w_i=0$ for $0<i<2^k$ for a manifold $M$ is equivalent to the condition that $H^*(M)$ is Poincar\'e dual with respect to the subalgebra $\mathcal{A}(k-1)$ of the Steenrod algebra spanned by $Sq^1, Sq^2,..., Sq^{2^{k-1}}$. 
The latter means that the actions of these squares are symmetrically distributed upon reflection in the middle dimension, where we should take every operation to its image under the canonical anti-homomorphism $\chi$ from $\mathcal{A}(k)$ to itself (see \cite{Douglas2010} for a detailed explanation).

The well-known fact that the Stiefel-Whitney classes of degree a power of two generate all Stiefel-Whitney classes of a manifold is a consequence of the Wu formula and Lemma \ref{claim}. The latter we will also need for the proof of the main theorem, hence we include both proofs.

\begin{lem} \label{power2}
	The Stiefel-Whitney classes of a smooth manifold are generated by those of the form $w_{2^i}$.
\end{lem}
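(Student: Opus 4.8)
The plan is to use the Wu formula (\ref{Wuformula}) together with a downward induction to show that every Stiefel-Whitney class $w_m$ with $m$ not a power of $2$ can be expressed as a polynomial in Steenrod squares applied to Stiefel-Whitney classes of strictly smaller degree; iterating then shows everything is generated by the $w_{2^i}$ (note that a Steenrod square applied to a polynomial in the $w_j$ is, by the Cartan formula and the Wu formula, again a polynomial in the $w_j$). The key combinatorial input is a statement about binomial coefficients mod $2$: if $m$ is not a power of $2$, write $m = 2^a + r$ with $2^a$ the largest power of $2$ strictly less than $m$, so $0 < r < 2^a$. Then I want to extract $w_m$ from the Wu formula with a well-chosen pair $(i,j)$, and the leading coefficient must be nonzero mod $2$.

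First I would set $i = r$ and $j = m - r = 2^a$ in the Wu formula, giving
\begin{equation}
Sq^{r}(w_{2^a}) = \sum_{t=0}^{r} \binom{2^a + t - r - 1}{t} \, w_{r-t}\, w_{2^a + t}.
\end{equation}
The $t = r$ term is $\binom{2^a - 1}{r} w_0 w_m = \binom{2^a-1}{r} w_m$, and by Kummer's theorem (or Lucas's theorem) $\binom{2^a - 1}{r} \equiv 1 \pmod 2$ for every $0 \le r \le 2^a - 1$, since $2^a - 1$ has all $1$'s in its binary expansion. Hence
\begin{equation}
w_m = Sq^{r}(w_{2^a}) + \sum_{t=0}^{r-1} \binom{2^a + t - r - 1}{t} w_{r-t} w_{2^a + t},
\end{equation}
which expresses $w_m$ in terms of $Sq^{r}$ applied to $w_{2^a}$ (a class of strictly smaller degree $2^a < m$) and products $w_{r-t} w_{2^a+t}$ in which both factors have degree strictly less than $m$ (here $r - t \le r < 2^a \le m$ is strict, and $2^a + t \le 2^a + r - 1 < m$). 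By downward induction on $m$, or equivalently by repeatedly applying this reduction, every $w_j$ is a polynomial in Steenrod squares of the $w_{2^i}$ with $2^i \le j$, and since $Sq$ preserves the subalgebra generated by all $w$'s (via Cartan and the Wu formula), we are done.

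The main obstacle is purely the binomial bookkeeping: one must verify both that $\binom{2^a-1}{r}$ is odd for all relevant $r$ (clean, via Lucas) and that all other classes appearing genuinely have degree $< m$ so the induction is well-founded — the latter needs the choice $2^a < m \le 2^{a+1} - 1$, i.e. $2^a$ is the \emph{largest} power of $2$ below $m$, which forces $r = m - 2^a < 2^a$. I expect no real difficulty beyond being careful that the reduction strictly decreases a suitable complexity measure (e.g. $m$ itself, treating $Sq^r(w_{2^a})$ as already "generated" once $w_{2^a}$ is, and expanding the products recursively).
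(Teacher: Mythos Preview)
Your proposal is correct and follows essentially the same route as the paper: rearrange the Wu formula to isolate $w_{i+j}$, choose $j=2^a$ the largest power of $2$ below $m$ and $i=r=m-2^a$, and invoke Lucas's theorem (the paper packages this as its Lemma~\ref{claim} with $m=1$) to see that the leading binomial $\binom{2^a-1}{r}$ is odd, then induct downward. The paper additionally remarks that no such decomposition exists when $m$ is itself a power of $2$, but that is not needed for the lemma as stated.
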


Lemma \ref{power2} implies that a $k$-orientable smooth manifold is $k+1$-orientable precisely if the class $w_{2^{k}}$ vanishes. This motivates why in the definition of $k$-orientability, it is natural to let the number of vanishing Stiefel-Whitney classes double as we increase $k$ by one. It also follows that in order for a smooth manifold to be $k$-orientable it is enough to require $w_i$ to vanish for $i\leq 2^{k-1}$.

\begin{cor}
	A smooth manifold is $k$-orientable if and only if $w_i=0$ for $0<i\leq 2^{k-1}$.
\end{cor}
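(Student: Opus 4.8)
The plan is to deduce this directly from Lemma \ref{power2}. One direction is immediate: if $M$ is $k$-orientable then by definition $w_i=0$ for all $0<i<2^k$, and in particular for all $0<i\le 2^{k-1}$ (the statement being vacuous when $k=0$). So the content is in the converse.

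For the converse, suppose $w_i=0$ for $0<i\le 2^{k-1}$; I must show that $w_i=0$ also in the remaining range $2^{k-1}<i<2^k$. By Lemma \ref{power2} each such $w_i$ can be written as a polynomial with $\Z/2$ coefficients in the classes $w_{2^j}$. Since $w_i$ has degree $i>0$, this polynomial is homogeneous of degree $i$ with no constant term, so every monomial occurring in it is a nonempty product of classes $w_{2^j}$ whose degrees sum to $i$; in particular each factor $w_{2^j}$ satisfies $0<2^j\le i<2^k$, hence $2^j\le 2^{k-1}$. By hypothesis every such factor vanishes, so every monomial vanishes and $w_i=0$. Combining the two directions gives the claimed equivalence.

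There is essentially no serious obstacle here: the only point requiring a moment's care is the observation that because $i<2^k$, every power of two that is at most $i$ is in fact at most $2^{k-1}$, so the hypothesis ``$w_i=0$ for $0<i\le 2^{k-1}$'' is exactly strong enough to annihilate all the generators that can appear in the expression for $w_i$. Alternatively one can argue by induction on $k$ using the remark following Lemma \ref{power2}, namely that a $(k-1)$-orientable smooth manifold is $k$-orientable precisely when $w_{2^{k-1}}$ vanishes: the hypothesis supplies both that $M$ is $(k-1)$-orientable (apply the inductive hypothesis to the weaker bound $w_i=0$ for $0<i\le 2^{k-2}$) and that $w_{2^{k-1}}=0$, whence $M$ is $k$-orientable.
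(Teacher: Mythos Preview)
Your alternative inductive argument is correct and is exactly the paper's reasoning: immediately before the Corollary the paper observes that, by Lemma~\ref{power2}, a $(k{-}1)$-orientable smooth manifold is $k$-orientable precisely when $w_{2^{k-1}}=0$, and the Corollary then follows by induction on $k$ just as you outline.

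Your first argument, however, has a genuine gap. Lemma~\ref{power2} does \emph{not} say that $w_i$ is a polynomial in the $w_{2^j}$; its proof expresses $w_n$ (for $n$ not a power of two) as $Sq^{i'}(w_j)$ plus products of lower-degree Stiefel--Whitney classes, and the term $Sq^{i'}(w_j)$ is not in general a polynomial in lower $w_l$'s. Indeed in $H^*(BO)$ the $w_l$ are algebraically independent, so for instance $w_3$ is not a polynomial in $w_1,w_2$. What the proof of Lemma~\ref{power2} actually gives is that if all $w_l$ with $0<l<n$ vanish then so does $w_n$ (since then both $Sq^{i'}(w_j)=Sq^{i'}(0)$ and the product terms vanish). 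This still yields the Corollary, but via an induction on $i$ in the range $2^{k-1}<i<2^k$, not via a single polynomial identity; your degree observation that any power of two at most $i<2^k$ is at most $2^{k-1}$ is the right idea, but it should be fed into this inductive setup rather than into a ``monomial'' argument that Lemma~\ref{power2} does not supply.
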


To prove Lemma \ref{power2}, we need the following combinatorial lemma.

\begin{lem}\label{claim}
	
	\begin{equation}{2^k m-1 \choose b} \equiv 1 \,\,\,(mod\, 2) \end{equation} 
	for all $0\leq b \leq 2^{k-1}$ and $m, k \geq 1$.
\end{lem}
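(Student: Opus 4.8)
The plan is to prove this via Lucas' theorem (equivalently, Kummer's theorem on carries), reducing the statement to an elementary observation about binary expansions. Recall Lucas' theorem at the prime $2$: if $n=\sum_i n_i 2^i$ and $b=\sum_i b_i 2^i$ are the binary expansions, then $\binom{n}{b}\equiv\prod_i\binom{n_i}{b_i}\pmod 2$, so $\binom{n}{b}$ is odd if and only if $b_i\le n_i$ for every $i$, i.e. the binary digits of $b$ form a subset of those of $n$.

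First I would determine the binary expansion of $n=2^k m-1$. Writing $2^k m-1=2^k(m-1)+(2^k-1)$ and using $m\ge 1$, the integer $2^k(m-1)$ has all of its $k$ lowest binary digits equal to $0$, while $2^k-1=\sum_{i=0}^{k-1}2^i$. Hence no carrying occurs in this sum, and the $k$ lowest binary digits of $2^k m-1$ are all equal to $1$, the higher digits being those of $m-1$ shifted up by $k$. Next, since $0\le b\le 2^{k-1}$ and $2^{k-1}<2^k$, every nonzero binary digit of $b$ sits in one of the positions $0,1,\dots,k-1$. By the previous observation all of those positions are equal to $1$ in $2^k m-1$, so the digits of $b$ are a subset of the digits of $2^k m-1$, and Lucas' theorem gives $\binom{2^k m-1}{b}\equiv 1\pmod 2$.

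There is no genuine obstacle here; the only point that needs a moment's care is the boundary case $b=2^{k-1}$, whose unique nonzero binary digit lies in position $k-1$, still inside the block of $k$ consecutive ones at the bottom of $2^k m-1$, so it is covered. If one prefers to avoid invoking Lucas' theorem, the identical conclusion follows either from the Pascal identity $\binom{n}{b}=\binom{n-1}{b}+\binom{n-1}{b-1}$ by a short double induction on $k$ and $b$, or from Kummer's theorem by checking that adding $b$ to $(2^k m-1)-b$ in base $2$ produces no carries.
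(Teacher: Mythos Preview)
Your proof is correct and takes essentially the same approach as the paper: both apply Lucas' theorem at the prime $2$ and verify that the low binary digits of $2^k m-1$ are all $1$, so that any $b<2^k$ has its digits contained in those of $2^k m-1$. Your decomposition $2^k m-1 = 2^k(m-1)+(2^k-1)$ is a slightly cleaner way to see this than the paper's argument via the lowest nonzero digit of $m$, but the idea is the same.
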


\begin{proof}
	By Lucas's theorem (\cite{Lucas1878}), the value of a binomial ${a \choose b}$ modulo a prime $p$ can be calculated by considering the base $p$ expansions of $a$ and $b$:
	
	\begin{equation}
	{a \choose b} \equiv \prod_i {a_i \choose b_i}  \,\,\,(mod\, p), \label{Lucas}
	\end{equation}
	
	where $a_i$ and $b_i$ are the corresponding base $p$ digits of $a$ and $b$, and we use the convention that ${a_i \choose b_i}=0$ if $a_i<b_i$.
	
	Consider the case $p=2$ with $a=2^k m-1$ and $b \leq 2^{k-1}$. 
	Let $m = \sum_i m_i 2^i $ be the binary expansion of $m$, and $i_0$ the index of the lowest non-zero digit of $m$. Then 
	\begin{equation}
	a = 2^k m -1 = \sum_i m_i 2^{i+k} -1 =  \sum_{i>i_0} m_i 2^{i+k} + 2^{i_0 + k} -1,
	\end{equation}
	hence the binary expansion of $a$ ends in a sequence of 1's: $a_i=1$ for $0\leq i \leq i_0 + k-1$. On the other hand, $b_i=0$ for $i \geq k$. Hence every term in equation (\ref{Lucas}) is of the form ${1 \choose 0}$, ${1 \choose 1}$ or ${0 \choose 0}$, each of which equals 1 and hence ${a \choose b}\equiv 1 \,\,\,(mod\, 2)$.
\end{proof}

\begin{proof}[Proof of Lemma \ref{power2}.]
	We can rearrange the Wu formula (\ref{Wuformula}) as follows:
	\begin{equation}\label{Wuformula}
	{j-1 \choose i} w_{j+i} = Sq^i (w_j) + \sum_{t=0}^{i-1} {j+t-i-1 \choose t} w_{i-t} w_{j+t}.
	\end{equation}
	Hence we can rewrite $w_{n}$ in terms of Stiefel-Whitney classes of a lower degree precisely if we can split up $n$ as $i+j$ with $i,j\geq1$ in such a way that ${j-1 \choose i}$ is non-zero modulo 2.
	If $n$ is not a power of 2, then $n = 2^k + i$ for some $i<2^k$, setting $j=2^k$. Applying Lemma \ref{claim} for $m=1$ and $b=i$, we see that ${j-1 \choose i}\equiv 1 \,\,\,(mod\, 2)$. 
	
	If $n$ is a power of 2, then $i+j-1$ has only 1's in its binary expansion, hence every factor in the product given by equation (\ref{Lucas}) computing ${j-1 \choose i}$ modulo 2 is of the form ${1 \choose 0}\equiv 1$ or ${0 \choose 1} \equiv 0$. Then the only choice of $i$ for which ${j-1 \choose i}$ modulo 2 is non-zero is then $i=0$, in which case the Wu formula states $w_{n} = w_{n}$.
	
\end{proof}

The lemma below shows that we could also have used the Wu classes rather than the Stiefel-Whitney classes to define $k$-orientability.

\begin{lem}\label{orientableWu}
	The following are equivalent:
	\begin{enumerate}[(i)]
		\item A manifold is k-orientable
		\item $v_i=0$ for $0<i \leq 2^{k-1}$
		\item $v_i=0$ for $0<i < 2^k$
	\end{enumerate}

\end{lem}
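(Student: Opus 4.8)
The plan is to transfer the statement about Wu classes to the corresponding statement about Stiefel--Whitney classes by means of the relation $w = Sq(v)$, and then to quote the Corollary above.

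First I would isolate the following observation: for every $N \geq 1$ one has $v_i = 0$ for all $0 < i \leq N$ if and only if $w_i = 0$ for all $0 < i \leq N$. To see this I expand $w = Sq(v)$ in a fixed degree $n$; since $Sq^i(x) = 0$ whenever $i > |x|$, only the terms with $i \leq n-i$ survive, so
\begin{equation}
w_n = v_n + \sum_{1 \leq i \leq n/2} Sq^i(v_{n-i}),
\nonumber
\end{equation}
and every Wu class occurring in the sum has degree $n-i$ with $1 \leq n-i \leq n-1$. Reading this as a formula for $w_n$ in terms of $v_1,\dots,v_n$, a trivial induction shows that $v_1 = \dots = v_N = 0$ forces $w_1 = \dots = w_N = 0$; reading it (over $\Z/2$) as $v_n = w_n + \sum_{1 \leq i \leq n/2} Sq^i(v_{n-i})$ and inducting on $n$ gives the converse.

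Next I would apply this twice. Taking $N = 2^k - 1$ identifies condition (iii) with $k$-orientability, which gives (i)$\Leftrightarrow$(iii). Taking $N = 2^{k-1}$ identifies condition (ii) with the statement that $w_i = 0$ for $0 < i \leq 2^{k-1}$, which is again equivalent to $k$-orientability by the Corollary above -- equivalently by the computation in the proof of Lemma \ref{power2}, where the rearranged Wu formula together with Lemma \ref{claim} expresses each $w_n$ with $2^{k-1} < n < 2^k$ through Stiefel--Whitney classes of degree $\leq 2^{k-1}$. Since (iii)$\Rightarrow$(ii) is immediate, all three conditions are then equivalent.

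I do not expect a genuine obstacle here: the only real content is the passage from vanishing up to degree $2^{k-1}$ to vanishing up to degree $2^{k}-1$, and that has already been carried out via Lucas's theorem in Lemma \ref{claim}. If one prefers an argument that stays entirely on the side of Wu classes, I would instead prove (ii)$\Rightarrow$(iii) directly: for $2^{k-1} < l < 2^k$ the Adem relation gives
\begin{equation}
Sq^{l} = Sq^{l - 2^{k-1}} Sq^{2^{k-1}} + \sum_{c \geq 1} \binom{2^{k-1}-c-1}{l - 2^{k-1} - 2c}\, Sq^{l-c} Sq^{c},
\nonumber
\end{equation}
the leading coefficient being $1$ modulo $2$ by Lucas's theorem; pairing $Sq^l(x)$ with the fundamental class for $x \in H^{n-l}(M)$ and using $\langle Sq^a(z),\mu\rangle = \langle v_a \smile z,\mu\rangle$, each outer square that occurs is either $Sq^{l-2^{k-1}}$ with $0 < l - 2^{k-1} < 2^{k-1}$ (killed by (ii)) or $Sq^{l-c}$ with $2^{k-1} < l - c < l$ (killed by induction), so $\langle v_l \smile x, \mu\rangle = 0$ for all such $x$ and hence $v_l = 0$ by Poincar\'e duality. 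The one point needing care in this variant is that some outer squares exceed $2^{k-1}$, so the induction must be run over increasing $l$.
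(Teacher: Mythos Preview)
Your main argument is essentially the paper's: both extract from the triangular system $w_n = v_n + \sum_{i\geq 1} Sq^i(v_{n-i})$ the equivalence $v_1=\dots=v_N=0 \Leftrightarrow w_1=\dots=w_N=0$, and then invoke the Corollary to Lemma~\ref{power2} (via Lemma~\ref{claim}) for the step from $N=2^{k-1}$ to $N=2^k-1$. Your optional alternative for (ii)$\Rightarrow$(iii) via the Adem relation is also correct and in fact anticipates the paper's Proposition~\ref{Lemma}, which performs precisely this kind of decomposition in greater generality.
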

\begin{proof}
	It follows from equation (\ref{SW-Wu}) that
	\begin{equation}
	w_i = \sum_{j=0}^{\lfloor i/2 \rfloor} Sq^j (v_{i-j}).
	\end{equation}

	From this formula it follows immediately that $v_i=0$ for $i\leq 2^{k-1}$ implies $w_i=0$ for $i \leq 2^{k-1}$, and that $v_i=0$ for $i< 2^k$ implies $w_i=0$ for $i < 2^k$, giving the implications $(ii) \Rightarrow (i)$ and $(iii) \Rightarrow (i)$.
	
	We will show by induction that the lowest non-vanishing Stiefel-Whitney class equals the lowest non-vanishing Wu class, i.e. if $w_i = 0$ for $0<i<n$, then $w_i=v_i$ for $i\leq n$. Note that $w_1=v_1$. Assume that $w_i = 0$ for $0<i<n$ and by induction hypothesis $v_i=w_i=0$ for $0<i\leq n-1$.
	Then $$w_n = v_n + \sum_{j=1}^{\lfloor i/2 \rfloor} Sq^j (v_{i-j}) = v_n.$$
It follows that $w_i=0$ for $i<n$ implies $v_i=0$ for $i<n$, hence $(i) \Rightarrow (ii)$ and $(i) \Rightarrow (iii)$.
	
\end{proof}

When comparing $k$-orientability to $k$-parallelizability, one would naively expect that only $2^{k-1}$-parallelizability would imply $k$-orientability of a manifold. However, by the classical work of Stong (\cite{Stong1963}), the Stiefel-Whitney classes of the $n$-connective covers of $BO$ vanish exponentially with $n$, with the number of vanishing $w_i$ doubling at each non-trivial stage (occurring in degrees 0, 1, 2 and 4 modulo 8). Indeed, we have the following theorem:

\begin{thm}[\cite{Stong1963}]\label{parallel}
	Let $\phi(n)$ be the number of integers $s$ with $1\leq s\leq n$ such that $s \equiv 0,1,2,4 \,\,\,(mod\, 8)$. If a manifold $M$ is $n$-parallelizable for $n$ with $\phi(n)\geq k$, then $M$ is $k$-orientable. 
\end{thm}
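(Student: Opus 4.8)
The plan is to translate $n$-parallelizability into a lifting property along the Whitehead tower of $BO$, reduce the assertion to a statement about the mod-$2$ cohomology of a connective cover of $BO$, and then invoke Stong's computation of that cohomology \cite{Stong1963}.

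First I would reformulate the hypothesis. That $M$ is $n$-parallelizable means precisely that the stable classifying map $\tau\colon M\to BO$ of the tangent bundle lifts through the Whitehead tower to a map $\widetilde\tau\colon M\to BO\langle n+1\rangle$, where $BO\langle n+1\rangle\to BO$ is the $n$-connected cover. Since Stiefel--Whitney classes are stable, $w_i(M)=\tau^*(w_i)$ for the universal class $w_i\in H^i(BO)$, and this factors through $H^i(BO\langle n+1\rangle)$; writing $\overline w_i$ for the image of $w_i$ there, it suffices to prove $\overline w_i=0$ for all $0<i<2^{\phi(n)}$, because pulling back along $\widetilde\tau$ then yields $k$-orientability of $M$ for every $k\le\phi(n)$.

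Second, I would dispose of the elementary part of the vanishing. Because $BO\langle n+1\rangle$ is $n$-connected, $\overline w_i=0$ for $0<i\le n$, and in particular $\overline w_{2^r}=0$ for every power of two $2^r\le n$. By the Wu formula --- exactly the computation carried out in the proof of Lemma \ref{power2}, which rests on Lemma \ref{claim} --- any $w_i$ with $i$ not a power of two is expressible through $w_j$'s of strictly smaller degree, so the vanishing of all $\overline w_{2^r}$ with $2^r\le n$ forces $\overline w_i=0$ for all $0<i<2^{R+1}$, where $2^R$ is the largest power of two at most $n$. One checks that $\phi(n)=R+1$ precisely when $n\le 8$, so in that range the theorem is already proved.

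The remaining content --- and the main obstacle --- is that for $n\ge 9$ one has $2^{\phi(n)}>2^{R+1}$, so connectivity together with the Wu formula no longer suffices and one genuinely needs Stong's determination of $H^*(BO\langle n+1\rangle;\mathbb{Z}/2)$. The tower $\cdots\to BString\to BSpin\to BSO\to BO$ is built from principal fibrations with Eilenberg--MacLane fibers, one for each nontrivial homotopy group of $BO$; by Bott periodicity these occur in degrees $\equiv 0,1,2,4\pmod 8$, which is exactly what the function $\phi$ counts. Stong's calculation shows that after passing the first $k$ of these nontrivial stages the image of the Stiefel--Whitney classes vanishes in all degrees $0<i<2^{k}$ (while $\overline w_{2^{k}}\ne 0$), the vanishing range doubling at each stage --- including the stages in degrees $9,10,12,\dots$, where the homotopy group being killed sits far below the next relevant power of two, so that the doubling is produced by the interplay of the transgressions with the Steenrod action rather than by connectivity alone. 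Taking $k=\phi(n)$ and pulling back along $\widetilde\tau$ finishes the proof; essentially all the work lies in extracting this statement cleanly from \cite{Stong1963} and matching up the indexing conventions.
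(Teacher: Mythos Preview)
The paper does not supply a proof of this theorem; it simply records it as a consequence of Stong's computation \cite{Stong1963}, prefaced only by the remark that ``the Stiefel--Whitney classes of the $n$-connective covers of $BO$ vanish exponentially with $n$, with the number of vanishing $w_i$ doubling at each non-trivial stage.'' Your proposal is a correct and more detailed unpacking of exactly this: you reduce the question to the vanishing of the images $\overline{w}_i$ in $H^*(BO\langle n+1\rangle;\Z/2)$, isolate the elementary portion that follows from connectivity together with the Wu formula, and then defer the remaining range to Stong's calculation---which is precisely where the paper points as well.

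One small caution: you invoke Lemma~\ref{power2} for the classes $\overline{w}_i$ in $H^*(BO\langle n+1\rangle)$, whereas that lemma is stated for the Stiefel--Whitney classes of a smooth manifold. This is harmless, since the Wu formula~(\ref{Wuformula}) actually holds universally in $H^*(BO)$ and hence for its pullbacks, so the same argument applies; but it would be worth saying so explicitly.
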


In other words, a lift of the classifying map of the tangent space of $M$ to the $k^{th}$ \emph{non-trivial} stage in the Whitehead tower of $BO(n)$ precisely implies $k$-orientability.

\vs

In a similar way as one could consider lifts to a sequence of connected covers of $BO(n)$ to give a homotopy-theoretic definition of $k$-parallisability, one can build a sequence of covers of $BO(n)$ such that lifts give rise to a $k$-orientation:
\begin{equation}
BO(n) \leftarrow BSO(n) \leftarrow BSpin(n) \leftarrow BOr_3(n) \leftarrow BOr_4(n) \leftarrow... \nonumber
\end{equation}

Here we formally define $BOr_k(n)$ up to homotopy by taking the homotopy fibre of the maps that classify the appropriate Stiefel-Whitney classes:

\begin{equation}\label{fibreseq}
BOr_k(n) \rightarrow BOr_{k-1}(n)  \xrightarrow{w_{2^{k-1}}} K(\mathbb{Z}/2, 2^{k-1}).
\end{equation}

The class $w_{2^{k-1}}$ is non-trivial in the cohomology of $BOr_{k-1}= \lim_n BOr_{k-1}(n)$ by the lemma below.
\begin{lem}
$w_{2^{k-1}}$ is a non-zero characteristic class of the map $BOr_{k-1} \rightarrow BO$.
\end{lem}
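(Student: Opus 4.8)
The plan is to avoid computing $H^{*}(BOr_{k-1})$ directly and instead detect the class $w_{2^{k-1}}$ on an explicit test space. Over $\RPi = BO(1)$, let $\gamma$ be the tautological line bundle and put $\xi = \gamma^{\oplus 2^{k-1}}$, the Whitney sum of $2^{k-1}$ copies of $\gamma$. Writing $x$ for the generator of $H^{1}(\RPi)$, the Whitney sum formula gives $w(\xi) = (1+x)^{2^{k-1}} = 1 + x^{2^{k-1}}$, the second equality being the Frobenius identity over $\mathbb{F}_{2}$ (equivalently Lucas's theorem, compare Lemma~\ref{claim}). Hence $w_{i}(\xi) = 0$ for $0 < i < 2^{k-1}$, while $w_{2^{k-1}}(\xi) = x^{2^{k-1}} \neq 0$.

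Next I would lift the classifying map $c \colon \RPi \to BO$ of (the stabilisation of) $\xi$ successively through $BSO, BSpin, BOr_{3}, \dots, BOr_{k-1}$. By the defining fibre sequence (\ref{fibreseq}), $BOr_{j}$ is the homotopy fibre of the map $BOr_{j-1} \xrightarrow{w_{2^{j-1}}} K(\Z/2, 2^{j-1})$, where $w_{2^{j-1}}$ denotes the pullback of the universal Stiefel--Whitney class along $BOr_{j-1} \to BO$. Thus a lift $\RPi \to BOr_{j-1}$ whose composite to $BO$ is $c$ extends to a lift $\RPi \to BOr_{j}$ precisely when $w_{2^{j-1}}(\xi) = c^{*}(w_{2^{j-1}})$ vanishes in $H^{2^{j-1}}(\RPi)$; this holds for $1 \leq j \leq k-1$ because $2^{j-1} < 2^{k-1}$ and $w(\xi) = 1 + x^{2^{k-1}}$. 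As these maps are principal fibrations, the lifts exist and may be chosen so that the composite down to $BO$ remains $c$; iterating produces $\tilde{c} \colon \RPi \to BOr_{k-1}$ with $(BOr_{k-1} \to BO) \circ \tilde{c} \simeq c$.

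By naturality of Stiefel--Whitney classes, $\tilde{c}^{*}(w_{2^{k-1}}) = c^{*}(w_{2^{k-1}}) = w_{2^{k-1}}(\xi) = x^{2^{k-1}} \neq 0$, so $w_{2^{k-1}}$ is non-zero in $H^{2^{k-1}}(BOr_{k-1})$, as claimed (for $k = 1$ there is no tower above $BO$ and the statement that $w_{1} \neq 0$ in $H^{1}(BO)$ is immediate). The computational content here is trivial; the one point that warrants care is the iterated obstruction argument, i.e.\ identifying at each stage the lifting obstruction with the pulled-back class $w_{2^{j-1}}(\xi)$ and choosing the lifts compatibly so that naturality can be applied at the end. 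This is standard, but should be spelled out; the alternative of computing $H^{*}(BOr_{k-1})$ through a chain of Serre spectral sequences would be considerably more laborious and is sidestepped entirely by this test-space approach.
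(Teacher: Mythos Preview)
Your argument is correct and follows the same overall template as the paper's proof: exhibit a test space mapping to $BO$ whose Stiefel--Whitney classes vanish below degree $2^{k-1}$ but not in degree $2^{k-1}$, lift through the tower to $BOr_{k-1}$, and conclude by naturality. The genuine difference is in the choice of test object. The paper uses the connective cover $BO[n]$ with $\phi(n)=k-1$ and invokes Stong's computation \cite{Stong1963} to know that $w_i$ vanishes for $0<i<2^{k-1}$ while $w_{2^{k-1}}$ survives; the lift then exists by the same obstruction-theoretic reasoning you describe. Your approach replaces this black box with the explicit bundle $\gamma^{\oplus 2^{k-1}}$ over $\mathbb{RP}^\infty$ and the Frobenius identity $(1+x)^{2^{k-1}}=1+x^{2^{k-1}}$, which is entirely self-contained. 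What you gain is that the proof no longer depends on Stong's theorem, making it more elementary; what the paper's choice gains is brevity, since the citation absorbs both the vanishing and non-vanishing statements in one stroke, and it simultaneously motivates the comparison between $k$-orientability and $k$-parallelisability developed nearby.
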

\begin{proof}
For $n$ such that $\phi(n) = k-1$ and $BO[n]$ the $n$-connected cover of $BO$, consider the diagram

\begin{center}
	\begin{tikzcd}
		\text{ }  & BOr_{k-1} \arrow{d} & \\
		BO[n]\arrow{r}\arrow[dotted]{ru} & BO  \arrow{r}{w_{2^{k-1}}} & K(\mathbb{Z}/2, 2^{k-1})
	\end{tikzcd}
\end{center}

By the work of Stong in \cite{Stong1963}, $BO[n]$ has the propery that its Stiefel-Whitney classes $w_i$ vanish for $0<i<2^{k-1}$, but $w_{2^{k-1}}$ does not vanish. Hence the map $BO[n] \rightarrow BO$ admits a lift to $BOr_{k-1}$, while the composite of the maps along the bottom of the diagram is not null-homotopic. Therefore the composite $BOr_{k-1}\rightarrow BO \xrightarrow{w_{2^{k-1}}} K(\mathbb{Z}/2, 2^{k-1})$ is not null-homotopic either.	
\end{proof}

By Lemma \ref{power2} it suffices to kill only the Stiefel-Whitney classes of the form $w_{2^{i}}$ for $i<k$ to classify $k$-orientable smooth manifolds.

For $k\geq3$, the long exact sequence in homotopy corresponding to (\ref{fibreseq}) reduces to the short exact sequence:

\begin{equation}
0 \rightarrow \pi_{2^{k-1}} (BOr_k) \rightarrow \Z \xrightarrow{w_{2^{k-1}}} \Z/2 \rightarrow 0,
\end{equation}

hence $\pi_{2^{k-1}} (BOr_k)= \Z$.

\section{Proof of Theorem \ref{mainthm}}

We will now prove our main result that $k$-orientable manifolds have an even Euler characteristic as long as their dimension is not a multiple of $2^{k+1}$. In order to do so, we first prove a technical result that describes a relation in the Steenrod Algebra, arising from repeated application of the Adem relations. It tells us how Steenrod squares decompose that are $k-1$ times even but not $k$ times even.

\begin{prop}\label{Lemma}
	Let $n=2^k m + 2^{k-1}$ for some $m\geq 0$, $k\geq 1$. Then
	
	\begin{equation}
	Sq^n =  \sum_{i=1}^{2^{k-1}} Sq^i  \alpha^{n-i},
	\end{equation}
	where $\alpha^{n-i}$ is a sum of composites of squares.
\end{prop}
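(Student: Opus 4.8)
The plan is to prove this by induction on $k$. For the base case $k=1$, we have $n = 2m+1$ odd, and we want to write $Sq^{2m+1} = Sq^1 \alpha^{2m}$ for some $\alpha^{2m}$ a sum of composites of squares. This is the classical fact that an odd square factors through $Sq^1$: indeed, $Sq^{2m+1} = Sq^1 Sq^{2m}$ follows directly from the Adem relation with $a=1$, $b=2m$ (since $1 < 2\cdot 2m$, and the right-hand side is $\binom{2m-1}{1} Sq^{2m+1} Sq^0 = Sq^{2m+1}$ because $\binom{2m-1}{1} = 2m-1$ is odd). Wait — I need to double-check the direction; more simply, the Adem relation expands $Sq^a Sq^b$ for $a < 2b$, so I want to instead observe that $Sq^1 Sq^{2m}$ has only the single term $\binom{2m-1}{1}Sq^{2m+1} = Sq^{2m+1}$, giving $Sq^{2m+1} = Sq^1 Sq^{2m}$ directly. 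So $\alpha^{2m} = Sq^{2m}$ works.

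For the inductive step, assume the result holds for $k-1$ and all relevant $m$, and let $n = 2^k m + 2^{k-1} = 2(2^{k-1}m + 2^{k-2})$, which is even (here $k \geq 2$). Since $n$ is even, I would first write $Sq^n$ in terms of squares $Sq^a Sq^b$ with $b$ odd, or more usefully, I want to reduce to the $k-1$ case. The idea: $n$ is $(k-1)$-times even but not $k$-times even, so write $n = 2n'$ where $n' = 2^{k-1}m + 2^{k-2}$ is $(k-2)$-times even but not $(k-1)$-times even. By the inductive hypothesis applied to $n'$ (with exponent $k-1$), $Sq^{n'} = \sum_{i=1}^{2^{k-2}} Sq^i \alpha^{n'-i}$. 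The key tool will be a relation expressing $Sq^{2n'}$ via $Sq^{n'}$ and lower squares — something like $Sq^{2n'} = Sq^{n'}Sq^{n'} + (\text{terms } Sq^a Sq^b \text{ with } a+b = 2n', a < n')$, obtained from the Adem relation applied to $Sq^{n'}Sq^{n'}$ or $Sq^{n'+1}Sq^{n'-1}$ and rearranging. Then substitute the inductive expression for one or both factors of $Sq^{n'}$ and regroup so that every resulting term begins with $Sq^i$ for $1 \leq i \leq 2^{k-1}$; terms beginning with $Sq^i$ for $i \leq 2^{k-2}$ are already fine, and terms $Sq^i \cdots$ with $2^{k-2} < i \leq 2^{k-1}$ must be produced by the cross terms $Sq^i \alpha^{n'-i} Sq^{n'}$.

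The main obstacle I anticipate is the bookkeeping: after substituting, I get terms of the form $Sq^i \alpha^{n'-i} Sq^j \alpha^{n'-j}$ with $i, j \leq 2^{k-2}$, which start with $Sq^i$, $i \leq 2^{k-2} \leq 2^{k-1}$ — so those are fine — but also the "diagonal" or residual terms from the $Sq^{2n'}$ relation of the form $Sq^a Sq^b$ with $a$ ranging up to near $n' = 2^{k-1}m + 2^{k-2}$, which can be much larger than $2^{k-1}$. I must show every such leading square with $a > 2^{k-1}$ can itself be further decomposed (recursively, again via Adem) until it is absorbed into a term led by a square of size $\leq 2^{k-1}$. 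This is plausible because the obstruction to decomposing $Sq^a$ into smaller pieces is precisely that $a$ be a power of two, and the relevant leading exponents $a$ occurring here lie strictly between $2^{k-1}$ and $n$ and one checks they are never a "bad" power of two that resists the needed factorization; making this precise — i.e., identifying the exact recursive relation and verifying the binomial coefficients governing it are odd (via Lucas's theorem, as in Lemma \ref{claim}) — is the technical heart of the argument. I would structure the induction carefully so that the statement being proved is uniform in $m$, allowing the recursive calls on strictly smaller exponents to go through.
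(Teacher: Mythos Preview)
Your base case $k=1$ is correct. The inductive step, however, has a real gap. The relation you reach for --- expressing $Sq^{2n'}$ via $Sq^{n'}Sq^{n'}$ plus lower terms --- does not exist in the form you need: the Adem expansion of $Sq^{n'}Sq^{n'}$ has $c=0$ coefficient $\binom{n'-1}{n'}=0$, so $Sq^{2n'}$ simply does not appear and cannot be isolated this way. You could try other pairs $(a,b)$ with $a+b=2n'$, but for any choice with $\binom{b-1}{a}$ odd the residual terms $Sq^{2n'-c}Sq^c$ still have leading exponent close to $2n'$, far above $2^{k-1}$. You acknowledge this and call the recursive reduction ``plausible'', but your induction on $k$ only controls $Sq^{n'}$, not $Sq^{2n'-c}$ for small $c$; those are exactly the terms that appear, and nothing in your scheme handles them. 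In effect the induction hypothesis is too weak for the step you want to take.

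The paper avoids this by not inducting on $k$ at all. It fixes $k$ and does strong induction on the excess $i_0 := (\text{exponent}) - 2^k m$, running from $1$ up to $2^{k-1}$. Apply the rearranged Adem relation with $a = i_0$ and $b = 2^k m$; since $i_0 \leq 2^{k-1}$ and the binary expansion of $2^k m - 1$ ends in at least $k$ ones, Lemma~\ref{claim} gives $\binom{2^k m - 1}{i_0} \equiv 1 \pmod 2$, so
\[
Sq^{2^k m + i_0} \;=\; Sq^{i_0}\,Sq^{2^k m} \;+\; \sum_{c=1}^{\lfloor i_0/2\rfloor} \binom{2^k m - c - 1}{i_0 - 2c}\, Sq^{2^k m + i_0 - c}\, Sq^c.
\]
The first term is already in the desired form (leading square of degree $i_0 \leq 2^{k-1}$), and every $Sq^{2^k m + i_0 - c}$ on the right has strictly smaller excess $i_0 - c$, so strong induction applies. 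Taking $i_0 = 2^{k-1}$ gives the proposition. The key idea is to keep $a$ small throughout --- always $\leq 2^{k-1}$ --- rather than letting it be of order $n'$ as in your approach.
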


\begin{proof}
	For $m=0$, $n = 2^{k-1}$, thus the result follows. 
	
	Assume $m\geq 1$.
	For $a$ and $b$ with $a<2b$, the Adem relations can be rearranged as follows:
	\begin{equation}\label{15}
	{b-1\choose a} Sq^{a+b} =Sq^a  Sq^b + \sum_{c=1}^{\lfloor a/2\rfloor} {b-c-1 \choose a-2c} Sq^{a+b-c}  Sq^c .
	\end{equation}
	
	Applying this to $a=2^{k-1}$, $b= 2^k m$, with $n = a+b$, gives:
	
	\begin{equation}
	{2^k m-1 \choose 2^{k-1}} Sq^{n} = Sq^{2^{k-1}}  Sq^{2^k m} + \sum_{c=1}^{2^{k-2}} {2^k m-c-1 \choose 2^{k-1}-2c} Sq^{n-c}  Sq^c  .\label{Sqn}
	\end{equation}
	
	By Lemma \ref{claim}, the binomial ${2^k m-1 \choose 2^{k-1}}$ is non-zero. Hence we have now expressed $Sq^n$ as the sum of a term with a small square in front, $Sq^{2^{k-1}}  Sq^{2^k m}$, and a remainder. Each term in the remainder contains $Sq^{n-c}$ for some $c$, with $1 \leq c \leq 2^{k-2}$. Define $i= 2^{k-2}-c$ such that $Sq^{n-c}  = Sq^{2^k m +i}$. We use strong induction on $i$ to show that each of these squares can be written in the desired form, as a sum of composites in which each term has a square on the left of degree less than or equal to $ 2^{k-1}$.
	
	The base case is given by $i=1$. Applying equation (\ref{15}) for $a=1$, $b= 2^k m$ gives:
	\begin{equation}
	{2^k m-1 \choose 1} Sq^{2^k m   + 1} = Sq^{2^k m   + 1} =Sq^{1}  Sq^{2^k m},
	\end{equation}
	which is of the desired form.
	
	Suppose that $Sq^{2^k m +i}$ can be written in the desired form for all $i<i_0$. We will show that $Sq^{2^k m +i_0}$ can also be written in the desired form. Applying (\ref{15}) for $a=i_0$, $b= 2^k m$ gives:
	\begin{multline}
	{2^k m-1 \choose i_0} Sq^{2^k m   + i_0} = Sq^{i_0}  Sq^{2^k m}\\
	+
	\sum_{c=1}^{\lfloor i_0/2\rfloor} {2^k m-c-1 \choose i_0-2c} Sq^{2^k m + i_0-c}  Sq^c.
	\label{reapply}
	\end{multline}
	By Lemma \ref{claim}, the binomial ${2^k m-1 \choose i_0}$ is non-zero. 
	The first term on the right hand side is of the right form, and by the hypothesis, each $Sq^{2^k m + i_0-c}$ can be written in the desired form. This concludes the induction.
	
	Hence we observe that all terms in equation (\ref{Sqn}) are of the form $Sq^i  \alpha'^{n-i}$ for some composite of squares $\alpha'^{n-i}$. Collecting the terms with the same $Sq^i$ in front and making use of the linearity of the squares renders the desired formula.
\end{proof}

\begin{thm}\label{morewus2}
	Let $M^{n}$ be a manifold or Poincar\'e complex of dimension $n$.  If $M$ is $k$-orientable for some $k$, then the Wu classes $v_l$ vanish for all $l$ such that $2^k \nmid l$.
\end{thm}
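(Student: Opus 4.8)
The plan is to leverage Proposition~\ref{Lemma} together with the defining property of the Wu classes and the $k$-orientability hypothesis. Recall that for $x \in H^{n-l}(M)$ we have $\langle v_l \smile x, \mu \rangle = \langle Sq^l(x), \mu\rangle$, so to show $v_l = 0$ it suffices (by Poincar\'e duality over $\mathbb{F}_2$) to show that $Sq^l$ acts as zero on $H^{n-l}(M)$, or at least that $\langle Sq^l(x),\mu\rangle = 0$ for all $x$. The strategy is to write $l$ in the form $l = 2^{k-1}(2m' + 1)$ when $2^k \nmid l$ but $2^{k-1} \mid l$, and more generally to induct on the $2$-adic valuation of $l$: if $v_l$ vanishes whenever $\nu_2(l) < k-1$, it remains to handle $\nu_2(l) = k-1$ exactly, i.e. $l = 2^k m + 2^{k-1}$, which is precisely the shape covered by Proposition~\ref{Lemma}.

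The key computation: by Proposition~\ref{Lemma}, for $l = 2^k m + 2^{k-1}$ we can write $Sq^l = \sum_{i=1}^{2^{k-1}} Sq^i \alpha^{l-i}$ where each $\alpha^{l-i}$ is a sum of composites of Steenrod squares. Then for any $x$,
\begin{equation}
\langle v_l \smile x, \mu\rangle = \langle Sq^l(x), \mu\rangle = \sum_{i=1}^{2^{k-1}} \langle Sq^i(\alpha^{l-i}(x)), \mu\rangle = \sum_{i=1}^{2^{k-1}} \langle v_i \smile \alpha^{l-i}(x), \mu\rangle.
\end{equation}
Since $M$ is $k$-orientable, Lemma~\ref{orientableWu} gives $v_i = 0$ for $0 < i < 2^k$, and in particular for $1 \leq i \leq 2^{k-1}$. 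Hence every term vanishes, so $\langle v_l \smile x, \mu\rangle = 0$ for all $x \in H^{n-l}(M)$, and by Poincar\'e duality $v_l = 0$. This handles all $l$ with $\nu_2(l) = k-1$. For $l$ with smaller $2$-adic valuation $j < k-1$, note that $M$ being $k$-orientable implies $M$ is $(j+1)$-orientable (since $2^{j+1} \le 2^k$ so the relevant Stiefel--Whitney classes still vanish), and $l$ has the form $2^{j+1}m + 2^j$; applying the same argument with $k$ replaced by $j+1$ shows $v_l = 0$. Running over all $j$ with $0 \le j \le k-1$ covers exactly the integers $l$ with $2^k \nmid l$.

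The main obstacle I anticipate is purely bookkeeping: making sure that Proposition~\ref{Lemma}'s output $Sq^i \alpha^{l-i}$ is correctly applied degreewise (the $\alpha^{l-i}(x)$ lands in degree $n - i$, so $\langle Sq^i(-),\mu\rangle$ does compute pairing against $v_i$), and that the reduction over the $2$-adic valuation genuinely exhausts all $l$ not divisible by $2^k$ while only ever invoking $v_i = 0$ for $i < 2^k$ — the case $\nu_2(l) = k-1$ uses $v_i$ for $i \le 2^{k-1} < 2^k$, and the smaller-valuation cases use even smaller ranges, so no circularity arises. One subtlety worth checking explicitly: Proposition~\ref{Lemma} is stated for $n = 2^k m + 2^{k-1}$ with the full range $1 \le i \le 2^{k-1}$, and we only need $v_i = 0$ in that range, which is guaranteed by $k$-orientability; so the argument is self-contained once Proposition~\ref{Lemma} is in hand.
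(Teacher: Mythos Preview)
Your proposal is correct and follows essentially the same approach as the paper: use Lemma~\ref{orientableWu} to get $v_i=0$ for $0<i<2^k$, then for any $l$ with $2^k\nmid l$ write $l=2^{k'}m'+2^{k'-1}$ where $k'-1=\nu_2(l)<k$, apply Proposition~\ref{Lemma} to decompose $Sq^l$ as $\sum_{i=1}^{2^{k'-1}} Sq^i\alpha^{l-i}$, and observe that each leading $Sq^i$ (with $i\le 2^{k'-1}<2^k$) vanishes into the top degree. The only cosmetic difference is that the paper handles all $2$-adic valuations at once by choosing $k'$ directly, whereas you split off the case $\nu_2(l)=k-1$ and then phrase the remaining cases as a reduction via $(j{+}1)$-orientability; both amount to the same computation.
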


\begin{proof}
	By Lemma \ref{orientableWu}, $v_i=0$ for $0<i< 2^{k}$ as a consequence of the $k$-orientability of $M$. Hence, by the definition of $v_i$, the operations
	\begin{equation}
	Sq^i: H^{2n-i}(M) \rightarrow H^{2n}(M) \nonumber
	\end{equation}
	vanish for $i< 2^{k}$.

	Consider $v_l$ such that $2^k \nmid l$. Let $k'-1< k$ be the highest power of 2 dividing l, such that $l = 2^{k'} m'+ 2^{k'-1}$ for some $m'$. Then by Proposition \ref{Lemma}, 
	
	\begin{equation}
	Sq^l =  \sum_{i=1}^{2^{k'-1}} Sq^i  \alpha^{l-i},
	\nonumber
	\end{equation}
	
	for some operations $\alpha^{l-i}$. 	
We consider this square with target the top dimension:
	\begin{equation}
	Sq^l: H^{n-l} (M;\mathbb{Z}/2)\rightarrow H^{n} (M;\mathbb{Z}/2).
	\nonumber
	\end{equation}
In the decomposition of $Sq^l$, the operations 		
\begin{equation}
Sq^i: H^{n-i} (M;\mathbb{Z}/2)\rightarrow H^{n} (M;\mathbb{Z}/2)
\nonumber
\end{equation}
are of degree $0<i< 2^{k'-1}< 2^k$ with target the top dimension and hence vanish. Therefore $Sq^l$ with target the top dimension vanishes, which implies that $v_l=0$.
\end{proof}

Theorem \ref{mainthm} for $k\geq 1$ now follows as a corollary of the theorem above. Note that the case $k=0$ is a trivial consequence of Poincar\'e duality: odd-dimensional manifolds have zero, thus in particular even, Euler characteristic.

\begin{cor}\label{mainthm2}
	A $k$-orientable manifold or Poincar\'e complex $M^{2n}$ ($k \geq 1$) of dimension $2n$ has an even Euler characteristic $\chi (M)$ if $2^{k+1}\nmid 2n$. 
\end{cor}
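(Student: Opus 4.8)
The plan is to deduce this immediately from Theorem~\ref{morewus2} together with Theorem~\ref{topwu}, with only a trivial number-theoretic observation in between. First I would record that, for a space of dimension $2n$, the divisibility condition $2^{k+1}\nmid 2n$ is equivalent to $2^{k}\nmid n$, since $2^{k+1}\mid 2n$ holds if and only if $2^{k}\mid n$.

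Next, assuming $M^{2n}$ is $k$-orientable with $2^{k+1}\nmid 2n$, I would apply Theorem~\ref{morewus2} in the single case $l=n$: because $2^{k}\nmid n$ by the previous step, the theorem yields that the top Wu class $v_n$ vanishes. (Note that $v_n$ is precisely the highest possibly-nonzero Wu class of a $2n$-dimensional Poincar\'e complex, so this is exactly the input needed.)

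Finally I would invoke Theorem~\ref{topwu}: for a Poincar\'e complex of dimension $2n$, the vanishing of $v_n$ forces $\chi(M)$ to be even. Chaining these three facts proves the corollary.

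I do not expect any real obstacle here: the statement is a formal consequence of the two named results, and the only thing requiring verification is the elementary equivalence $2^{k+1}\mid 2n \Leftrightarrow 2^{k}\mid n$. The complementary situation of odd dimension falls outside the hypotheses of the corollary as stated (one has $k\geq 1$ and dimension $2n$), and is in any case disposed of separately by Poincar\'e duality, since an odd-dimensional Poincar\'e complex has $\chi=0$, as already remarked in the text.
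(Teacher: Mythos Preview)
Your proposal is correct and follows essentially the same route as the paper's own proof: from $2^{k+1}\nmid 2n$ deduce $2^{k}\nmid n$, apply Theorem~\ref{morewus2} with $l=n$ to get $v_n=0$, and then conclude via Theorem~\ref{topwu}. The paper's argument is terser but identical in substance.
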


\begin{proof}
We have that $2^k \nmid n$, hence the top Wu class $v_n$ vanishes by Theorem \ref{morewus}. By Theorem \ref{topwu}, this implies that the Euler characteristic is even.

\end{proof}

\bibliographystyle{alpha}
\bibliography{library}

\end{document}